 \newtheorem{thm}{Theorem}[section]
 \newtheorem{lem}[thm]{Lemma}
 \newtheorem{prop}[thm]{Proposition}
 \theoremstyle{definition}
 \theoremstyle{remark}
 \newtheorem{rem}[thm]{Remark}
 \numberwithin{equation}{section}
 \newcommand{\A}{\mathcal{A}}
 \newcommand{\C}{\mathbb{C}}
 \newcommand{\abs}[1]{\left\vert#1\right\vert}
 \newcommand{\norm}[1]{\left\Vert#1\right\Vert}
\begin{document}

\title[Laplacian masa]{Measure-Multiplicity of the Laplacian masa}

\author[Dykema]{Ken Dykema$^{*}$}

\address{\hskip-\parindent
Ken Dykema, Department of Mathematics, Texas A{\&}M University,
College Station TX 77843-3368, U.S.A.}
\email{kdykema@math.tamu.edu}
\thanks{\footnotesize ${}^{*}$Research supported in part by NSF grant DMS--0901220.}

\author[Mukherjee]{Kunal Mukherjee}
\address{\hskip-\parindent
Kunal Mukherjee, Institute of Mathematical Sciences
C.I.T Campus, Taramani \\
Chennai - 600113\\
India}

\email{kunal@imsc.res.in}

\begin{abstract}
It is shown that for the Laplacian masa in the free group factors, the orthocomplement of the associated
Jones' projection is an infinite direct sum of  coarse bimodules.
\end{abstract}

\subjclass[2000]{46L10}

\keywords{von Neumann algebra, masa}

\maketitle

%%%%%%%%%%%%%%%%%%%%%%%%%%%%%%%%%%%%%%%%%%%%%
% \linenumbers
%%%%%%%%%%%%%%%%%%%%%%%%%%%%%%%%%%%%%%%%%%%%%

\section{Introduction}

Let $2\le N<\infty$ and consider the free group factor $L(\mathbb{F}_{N})$, which is
the group von Neumann algebra of the nonabelian free group $\mathbb{F}_{N}$ on $N$ generators,
and is a type II$_1$ factor.
We denote by $a_i$, $(1\le i\le N)$ the canonical unitary generators of $L(\mathbb{F}_{N})$ corresponding
to the free generators of the free group.
A maximal abelian subalgebra (or masa) of a II$_1$--factor is a self--adjoint, commutative subalgebra
that is maximal with respect to these properties.
Various kinds of masas were introduced by Dixmier~\cite{Dix} and have been extensively studied.
See, for example, the book~\cite{SS}.
A so--called generator masas in $L(\mathbb{F}_N)$ is a subalgebra generated by one of the $a_i$.
There are $N$ of them, and they are
easily seen
to be singular (see \cite{Dix} for the definition), as the cyclic subgroup
generated by  the corresponding group element is malnormal in $\mathbb{F}_{N}$ (see~\cite{JS}).
In fact, more is true.
The Puk\'{a}nszky invariant (see \cite{Puk}) is $\{\infty\}$ \cite{RS},
the malnormality above forces the single generator masas to be strongly mixing
\cite{JS} and the orthocomplement of the associated
Jones' projection is a coarse bimodule over the masa \cite[Cor.~4.6]{Muk3}.
Note that single generator masas are conjugate to each other by outer automorphisms
and in fact they are not inner conjugate, as they are pairwise orthogonal
in the sense of Popa \cite{Pop1}, \cite{Pop2}.

In \cite{Muk2} uncountably many pairwise non-conjugate singular masas
in the free group factors each with fixed Puk\'{a}nszky invariant were exhibited.
These masas were
obtained by manipulating suitable masas in the hyperfinite $\rm{II}_{1}$
factor.

The radial or the Laplacian masa in $L(\mathbb{F}_{N})$ is the
von Neumann subalgebra generated by $\sum_{i=1}^{N}(a_{i}+a_{i}^{-1})$.
This subalgebra has been studied extensively for computing the spectra
of convolutors and for understanding the harmonic analysis and
representation theory of $\mathbb{F}_{N}$. That this subalgebra is a masa
was initially proved by Pytlik \cite{Pyt}.
See \cite{SS} for an
operator-algebraic proof. That the radial masa is singular was proved
by R\u{a}dulescu \cite{Rad}, and, interestingly the
singularity was proved by the calculation of it's Puk\'{a}nszky invariant, which is also $\{\infty\}$.
Several attempts have been made to decide the conjugacy (by an automorphism)
of the radial and the single generator masas.
However, all conjugacy invariants that have been computed for both the radial and the single
generator masas have agreed.
For example, they have the same  Puk\'{a}nszky invariant, each is maximal injective \cite{CFRW}, \cite{Pop4}
and each is strongly mixing \cite[an argument analogous to
Theorem 4.1]{Muk3}, \cite{JS}, \cite{SS}.
While it is easy to see that the single
generator and the radial masas are not inner conjugate, the problem of
deciding the conjugacy as stated above remains open.
It is worth noting that there is no natural candidate of a `radial masa'
in $L(\mathbb{F}_{\infty})$.

The measure--multiplicity invariant of a masa $A\subseteq M$ was introduced in~\cite{NS}
and named in~\cite{DSS}, though in essence it
has been known for a long time;  it has been studied recently in  \cite{Muk1} and \cite{Muk2}.
It is a way of describing the $A,A$--bimoduled decomposition of $L^2(M)\ominus L^2(A)$.
If the masa $A$ has a separable, unital and weakly dense C$^*$--subalgebra isomorphic to $C(Y)$
then the measure--multiplicity invariant of $A$ consists (up to equivalence) of a pair $(\mu,m)$,
where $\mu$ is a measure on $Y\times Y$ and a measurable function $m:Y\times Y\to\{1,2,\ldots,\infty\}$
whose essential range equals the Puk\'{a}nszky invariant.
We call $\mu$ the left--right measure of $A\subseteq M$, and $m$ the multiplicity function.
In this paper we calculate the
left-right measure of the Laplacian masa $A\subseteq M=L(\mathbb{F}_N)$
and show that it is Radon--Nikodym equivalent to product
measure $\lambda\otimes\lambda$, where $\lambda$ is the measure on $Y$ arising from the trace of $M$
restricted to $A$.
This shows that the $A,A$--bimodule $L^2(M)\ominus L^2(A)$ is isomorphic to the
direct sum of infintely many copies of the coarse bimodule $L^2(A)\otimes L^2(A)$.

That the left-right measure of the Laplacian masa is absolutely continous
with respect to the product measure follows (but not obviously) from results in \cite{SS}.
However, had the left-right measure been absolutely continous with respect
to the product measure and not equivalent to it, the conjugacy
of the single generator and the Laplacian masas would have been settled
upon consideration of the measure-multiplicity invariant and
would have implied that the Laplacian masa has no free complement
\cite{DSS}.
The left-right measure of every masa in the free group
factors must contain a portion of the product measure as a summand.
This is a deep theorem of Voiculescu~\cite{Voi}.
However, for all known examples of masas in free group factors, the
associated bimodule contains a copy of the coarse bimodule as a direct summand.
It is worth noting that there are no
known examples of masas in $\rm{II}_{1}$ factors for which the left-right
measure is absolutely continuous with respect to product class and not
equivalent to it; a problem which has connections
to open questions in spectral realizations of dynamical systems as well;
see \cite{Muk3}, \cite{Muk2}.
Our proof that the left-right measure
of the Laplacian masa is indeed the class of product measure relies on the calculations and results
of R\u{a}dulescu~\cite{Rad}.

In~\S2 we recall notations
and formulae from \cite{Rad}. In \S3 we will compute the
left-right measure.

\section{R\u{a}dulescu's notation and formulae}

Here are some notations and facts from \cite{Rad} that will be used
in \S3.
The GNS Hilbert space of $L(\mathbb{F}_{N})$ associated to its
trace $\tau$ is $\ell^{2}(\mathbb{F}_{N})$. The associated inner product is denoted by
$\langle\cdot,\cdot\rangle_{2}$.
Let $\C[\mathbb{F}_{N}]$ denote
the group algebra of $\mathbb{F}_{N}$, i.e., the collection of
finite sums of the form $\sum_{w\in \mathbb{F}_{N}}\lambda_{w}w$,
$\lambda_{w}\in \C$ equipped with usual product structure. The void word
$\emptyset$ corresponds to the identity of $L(\mathbb{F}_{N})$.  Let
$\abs{\cdot}$ denote the word length function on $\mathbb{F}_{n}$.
Write
\begin{align}
\nonumber \chi_{n}=\sum_{\abs{w}=n}w, \text{ }n\geq 0.
\end{align}
Thus $\chi_{0}=1$, $\chi_1$  generates the Laplacian masa  and the following recurrence relations hold.
\begin{equation}\label{recur}
\begin{aligned}
\chi_{1}\chi_{1}&=\chi_{2} +2N, \\
\chi_{1}\chi_{n}&=\chi_{n}\chi_{1}=\chi_{n+1}+(2N-1)\chi_{n-1},\quad(n\geq 2).
\end{aligned}
\end{equation}

Let $A=W^{*}(\sum_{i=1}^{N}(a_{i}+a_{i}^{-1}))\subset L(\mathbb{F}_{N})$
denote the Laplacian masa. Then from equation~\eqref{recur} it follows that $A$ is the closure in $w.o.t.$
of $\text{span} \{\chi_{n}:n\geq 0\}$. Moreover, $\{\chi_{n}:n\geq 0\}$ forms
a complete set of orthogonal vectors in $L^{2}(A)$. Moreover, from equation
\eqref{recur} it follows that for all $n\geq 0$,
$\chi_{n}$ is a polynomial in $\chi_{1}$.

For any $l\geq 0$, denote by $M_{0}^{l}$  the span of words
of length $l$ in $\mathbb{F}_{N}$. Let $q_{l}$ denote the
projection of $\ell^{2}(\mathbb{F}_{N})$
onto $M_{0}^{l}$, and, let $S_{l}\subseteq M_{0}^{l}$ be the subspace
spanned by
$\{q_{l}(\chi_{1}w),q_{l}(w\chi_{1}): w\in \mathbb{F}_{N}, \abs{w}\leq l-1\}$.
Thus $S_{0}=\{0\}$. For a vector $\xi\in M_{0}^{l}$, $l\geq 1$, and integers
$r,s$ write
\begin{equation}
\xi_{r,s}=\begin{cases}
           q_{r+s+1}(\chi_{r}\xi \chi_{s}) &\text{ if }r,s\geq 0,\\
           0 &\text{ otherwise.}
          \end{cases}
\end{equation}
The next statement
is parts of Lemma 2 and 3 in \cite{Rad}. We state it here for convenience.

\begin{lem}\label{expanding_sums}
Let $\varepsilon,\varepsilon^{\prime}\in \{\pm 1\}$
and let $\beta,\beta^{\prime}\in M_{0}^{1}\ominus S_{1}$ be such that
\begin{align}
\nonumber &\beta =\sum_{\abs{w}=1}c_{w}w, \text{ with }c_{w}=\varepsilon c_{w^{-1}}, \\
\nonumber &\beta^{\prime} =\sum_{\abs{w}=1}d_{w}w, \text{ with }d_{w}=\varepsilon^{\prime} d_{w^{-1}}.
\end{align}
Then for any $n,n^{\prime},m,m^{\prime}\geq 0$,
\begin{align}
\nonumber &\langle \beta_{n,m},\beta'_{n^{\prime},m^{\prime}}\rangle_{2}
=\delta_{\varepsilon,\varepsilon^{\prime}}\delta_{n+m,n^{\prime}+m^{\prime}}(2N-1)^{n+m}
(-\varepsilon(2N-1))^{-\abs{n-n^{\prime}}}\langle \beta,\beta^{\prime}\rangle_{2}.
\end{align}
Moreover,
\begin{align}
\nonumber \chi_{n}\beta\chi_{m}=&\beta_{n,m}-(\beta_{n-2,m}+\beta_{n,m-2}+\varepsilon \beta_{n-1,m-1})\\
\nonumber &+ \sum_{k\geq 2}(-\varepsilon)^{k}(\varepsilon \beta_{n-k-1,m-k+1}+ \varepsilon \beta_{n-k+1,m-k-1}+2\beta_{n-k,m-k}).
\end{align}
\end{lem}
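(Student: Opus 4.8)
This is a statement attributed to Rădulescu — parts of Lemmas 2 and 3 of \cite{Rad} — so the natural route is to reconstruct those computations directly, and the whole thing reduces to bookkeeping with the recurrence relations \eqref{recur}. I would first establish the multiplication formula
\[
\chi_{n}\beta\chi_{m}=\beta_{n,m}-(\beta_{n-2,m}+\beta_{n,m-2}+\varepsilon\beta_{n-1,m-1})+\sum_{k\geq 2}(-\varepsilon)^{k}(\varepsilon\beta_{n-k-1,m-k+1}+\varepsilon\beta_{n-k+1,m-k-1}+2\beta_{n-k,m-k}),
\]
since the inner product formula will follow from it by pairing. The key input for the multiplication formula is a ``reduction one step at a time'' identity: for $\xi$ a homogeneous vector of length $l\ge 1$ supported off $S_{l}$, one has $\chi_{1}\xi=\xi_{1,0}+(\text{lower order reductions})$ and similarly $\xi\chi_{1}=\xi_{0,1}+\cdots$, where the correction terms are controlled precisely because $\xi\perp S_{l}$ kills the ``hook-back'' contributions of length $l-1$. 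The sign $\varepsilon$ enters through the palindrome symmetry $c_{w}=\varepsilon c_{w^{-1}}$: when a letter from $\chi_{1}$ cancels against the first (or last) letter of a word in $\beta$, the surviving coefficient picks up exactly the factor $\varepsilon$, which is why the cross term $\varepsilon\beta_{n-1,m-1}$ appears. I would prove the base cases $\chi_{1}\beta$, $\beta\chi_{1}$, $\chi_{1}\beta\chi_{1}$ by hand using \eqref{recur} and the definition of $\beta_{r,s}$, then induct on $n$ (and symmetrically on $m$) using $\chi_{n+1}=\chi_{1}\chi_{n}-(2N-1)\chi_{n-1}$ for $n\ge 2$ and $\chi_{2}=\chi_{1}^{2}-2N$; the inductive step is a matter of substituting the formula for $\chi_{n}\beta\chi_{m}$ and $\chi_{n-1}\beta\chi_{m}$ and collecting the telescoping sum, where the $(-\varepsilon)^{k}$ geometric tail is exactly what makes the recursion close.

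For the inner product formula I would argue as follows. By bilinearity and the first (multiplication) formula it suffices to know $\langle\beta_{n,m},\beta'_{n',m'}\rangle_{2}$ — but in fact one should go the other way: compute $\langle\beta_{n,m},\beta'_{n',m'}\rangle_{2}$ directly and then the multiplication formula is consistent with it. Since $\beta_{n,m}\in M_{0}^{n+m+1}$ and $\beta'_{n',m'}\in M_{0}^{n'+m'+1}$, orthogonality of distinct word-lengths forces the factor $\delta_{n+m,n'+m'}$. Writing $n+m=n'+m'=:p$, I would compute $\langle\chi_{n}\beta\chi_{m},\chi_{n'}\beta'\chi_{m'}\rangle_{2}=\langle\beta,\chi_{n}\chi_{n'}\beta'\chi_{m'}\chi_{m}\rangle_{2}$ using the adjoint ($\chi_{n}^{*}=\chi_{n}$), then expand $\chi_{n}\chi_{n'}$ and $\chi_{m}\chi_{m'}$ by \eqref{recur} — each is a sum of $\chi_{j}$'s with explicit powers of $(2N-1)$ — and reduce to evaluating $\langle\beta,\chi_{a}\beta'\chi_{b}\rangle_{2}$ for various $a,b$. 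The orthogonality $\beta,\beta'\perp S_{1}$ again kills all but the ``diagonal'' contribution, and the palindrome symmetry produces the $\delta_{\varepsilon,\varepsilon'}$ and the sign $(-\varepsilon(2N-1))^{-|n-n'|}$. Alternatively, and perhaps more cleanly, one takes the inner product of the two expansions of $\chi_{n}\beta\chi_{m}$ and $\chi_{n'}\beta'\chi_{m'}$ given by the multiplication formula and solves the resulting linear recursion in $|n-n'|$ for the quantity $f(n,m;n',m'):=\langle\beta_{n,m},\beta'_{n',m'}\rangle_{2}$; the normalization $f(0,p;0,p)$ (or $f(p,0;p,0)$) is computed by hand and equals $(2N-1)^{p}\delta_{\varepsilon,\varepsilon'}\langle\beta,\beta'\rangle_{2}$, matching the claimed formula at $|n-n'|=0$.

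\textbf{Main obstacle.} The real work — and the only genuinely delicate point — is the careful treatment of the low-degree correction terms when lengths drop below what the generic formula anticipates (the $\beta_{r,s}=0$ convention for $r<0$ or $s<0$, and the special relation $\chi_{1}\chi_{1}=\chi_{2}+2N$ rather than $\chi_{2}+(2N-1)$). One has to check that every one of these edge cases is absorbed correctly into the stated formulas; a single mismatched boundary term would propagate through the induction. The role of the hypothesis $\beta\perp S_{1}$ must be used at each reduction step, since it is precisely what guarantees that the ``cancellation'' terms land in $S_{1}^{\perp}$ and do not generate spurious lower-length debris. Apart from this, the argument is a (lengthy but routine) induction, which is why we follow \cite{Rad} and do not reproduce it in full; we record the statement here only because the specific closed form of the inner products in Lemma~\ref{expanding_sums} is what we shall need in \S3 to identify the left--right measure.
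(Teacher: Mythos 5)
The paper gives no proof of Lemma~\ref{expanding_sums}; it is quoted verbatim from Lemmas~2 and~3 of \cite{Rad}, and your proposal, which ends by deferring to \cite{Rad}, is in substance the same move, with an outline that is consistent with R\u{a}dulescu's actual computation. The mechanisms you identify are the right ones: I checked that the base cases come out as you describe ($\chi_1\beta=\beta_{1,0}$ because $\beta\perp S_1$ forces $\sum_w c_w=0$, and $\chi_1\beta\chi_1=\beta_{1,1}-\varepsilon\beta_{0,0}$, the $-\varepsilon$ arising exactly from a $\chi_1$-letter cancelling against the palindromic coefficients), and the induction via $\chi_1\chi_n=\chi_{n+1}+(2N-1)\chi_{n-1}$ with careful tracking of the degenerate terms is indeed how the expansion is obtained. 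One correction to the logic: the inner-product formula does not ``follow from the multiplication formula by pairing'' --- the Gram matrix of the family $\{\beta_{r,s}\}$ is independent input, and in \cite{Rad} it is proved first, by directly counting the reduced words of length $n+m+1$ admitting factorizations $uwv$ with $(|u|,|v|)=(n,m)$ and $u'w'v'$ with $(|u'|,|v'|)=(n',m')$; the condition $\sum_w c_w=0$ and the symmetry $c_w=\varepsilon c_{w^{-1}}$ are what produce the factor $(-\varepsilon(2N-1))^{-|n-n'|}$, so your ``direct computation'' alternative is the correct route and the ``solve a recursion from the paired expansions'' variant would still need an argument that the expansions determine the Gram matrix. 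Finally, be aware that what you have written is a plan: none of the base cases, the inductive step, or the word count is actually executed, so it identifies the difficulties correctly without discharging them --- which, given that the paper itself only cites \cite{Rad} here, is an acceptable resolution.
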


\begin{lem}\label{lem:betaip}
For $\beta$ as in the statement of the previous lemma and $n,m\geq 0$, we have
\begin{equation}\label{eq:cbccases}
\langle \chi_{n}\beta\chi_{m},\beta\rangle_{2}\\
=\begin{cases}
 (-\varepsilon)^{\frac{n+m}{2}}\varepsilon \norm{\beta}_{2}^{2},& n+m>2,\,|n-m|=2 \\
2(-\varepsilon)^{n}\norm{\beta}_{2}^{2},& n+m>2,\,n=m \\
-\norm{\beta}_{2}^{2},&n+m=2,\,(n,m)\neq(1,1) \\
-\varepsilon\norm{\beta}_{2}^{2},&(n,m)=(1,1) \\
\norm{\beta}_{2}^{2},&(n,m)=(0,0) \\
0,&\text{otherwise.}
\end{cases}
\end{equation}
\end{lem}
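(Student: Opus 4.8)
The plan is to derive Lemma~\ref{lem:betaip} directly from Lemma~\ref{expanding_sums}, which already gives both the expansion of $\chi_n\beta\chi_m$ in the basis $\{\beta_{r,s}\}$ and the inner products $\langle\beta_{r,s},\beta_{r',s'}\rangle_2$. Taking $\beta'=\beta$ (so $\eps'=\eps$, $\norm{\beta'}_2=\norm{\beta}_2$) in the inner-product formula of Lemma~\ref{expanding_sums}, I get
\begin{equation}\nonumber
\langle\beta_{r,s},\beta_{r',s'}\rangle_2=\delta_{r+s,r'+s'}(2N-1)^{r+s}(-\eps(2N-1))^{-\abs{r-r'}}\norm{\beta}_2^2,
\end{equation}
valid for $r,s,r',s'\ge 0$ (and the inner product is $0$ if any index is negative, since then the corresponding $\beta_{r,s}=0$). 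So the strategy is: expand $\chi_n\beta\chi_m=\sum_{r,s}a_{r,s}\beta_{r,s}$ using the second display of Lemma~\ref{expanding_sums}, pair term-by-term with $\beta=\beta_{0,0}$, and observe that only the terms with $r+s=0$, i.e.\ $r=s=0$, survive the Kronecker delta $\delta_{r+s,0}$. Thus $\langle\chi_n\beta\chi_m,\beta\rangle_2=a_{0,0}\,\norm{\beta}_2^2$, and the whole lemma reduces to reading off the coefficient $a_{0,0}$ of $\beta_{0,0}$ in the expansion, as a function of $(n,m)$.

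Next I would carry out the bookkeeping of which summand of the expansion
\begin{align}\nonumber
\chi_n\beta\chi_m=&\beta_{n,m}-(\beta_{n-2,m}+\beta_{n,m-2}+\eps\beta_{n-1,m-1})\\
\nonumber&+\sum_{k\ge2}(-\eps)^k(\eps\beta_{n-k-1,m-k+1}+\eps\beta_{n-k+1,m-k-1}+2\beta_{n-k,m-k})
\end{align}
can have both indices equal to zero. The leading term $\beta_{n,m}$ contributes iff $(n,m)=(0,0)$, giving coefficient $1$ — this is the fifth case. The three ``correction'' terms contribute $\beta_{0,0}$ when $(n,m)=(2,0)$, $(0,2)$, or $(1,1)$ respectively: the first two give coefficient $-1$ (fourth-from-top case $n+m=2$, $(n,m)\ne(1,1)$, where $|n-m|=2$ is automatic), and $\eps\beta_{n-1,m-1}$ gives coefficient $-\eps$ at $(1,1)$ — the $(n,m)=(1,1)$ case. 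Inside the sum over $k$, the term $2\beta_{n-k,m-k}$ has both indices zero iff $n=m=k\ge2$, contributing $(-\eps)^k\cdot2=2(-\eps)^n$ — the $n+m>2$, $n=m$ case; the term $\eps\beta_{n-k-1,m-k+1}$ has both indices zero iff $n=k+1$, $m=k-1$, i.e.\ $n-m=2$ and $k=m+1\ge2$ (so $m\ge1$, $n+m=2m+2>2$), contributing $(-\eps)^{m+1}\eps=(-\eps)^{(n+m)/2}\eps$; symmetrically $\eps\beta_{n-k+1,m-k-1}$ handles $m-n=2$. Together these give the top case $n+m>2$, $|n-m|=2$. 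One must also check there is no overlap between cases (e.g.\ $n=m\ge2$ cannot simultaneously satisfy $|n-m|=2$) and that in all remaining $(n,m)$ no term has both indices zero, yielding $0$.

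I do not expect any genuine obstacle here: the result is a pure computation once Lemma~\ref{expanding_sums} is granted, and the only care needed is in the case analysis — making sure the ranges $k\ge2$, $n-k\ge0$, etc.\ are respected so that spurious contributions (such as a phantom $\beta_{0,0}$ from $\eps\beta_{n-1,m-1}$ when $k$-sum indices go negative) are correctly excluded, and that the exponents $(-\eps)^k$ are rewritten in terms of $n$ or $(n+m)/2$ consistently with the parity forced by $|n-m|=2$ or $n=m$. The mild subtlety worth stating explicitly is that when $r+s=r'+s'=0$ the factor $(-\eps(2N-1))^{-\abs{r-r'}}=(-\eps(2N-1))^0=1$, so no power of $2N-1$ survives and the answer is a clean multiple of $\norm{\beta}_2^2$; this is why the formula~\eqref{eq:cbccases} is $N$-independent even though the ambient factor is not.
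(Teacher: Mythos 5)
Your proposal is correct and follows essentially the same route as the paper: expand $\chi_n\beta\chi_m$ via Lemma~\ref{expanding_sums}, note $\beta=\beta_{0,0}$, and use the orthogonality relation $\delta_{r+s,0}$ to reduce everything to reading off the coefficient of $\beta_{0,0}$, with exactly the case analysis you describe. The paper's proof is just a more compressed version of the same bookkeeping.
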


\begin{proof}
{}From Lemma \ref{expanding_sums} we have
\begin{align}
\langle \chi_{n}\beta\chi_{m},\beta\rangle_{2}
 &=\langle \beta_{n,m},\beta\rangle_{2} \notag \\
 &\;-\biggl(\langle\beta_{n-2,m},\beta\rangle_{2}
 +\langle\beta_{n,m-2},\beta\rangle_{2}
 +\varepsilon \langle\beta_{n-1,m-1},\beta\rangle_{2}\biggr) \label{innerprod1} \\
&\;+\sum_{k\geq 2}(-\varepsilon)^{k}\begin{aligned}[t]\biggl(&\varepsilon\langle \beta_{n-k-1,m-k+1},\beta\rangle_{2}
 +\varepsilon\langle\beta_{n-k+1,m-k-1},\beta\rangle_{2} \\
 &+2\langle \beta_{n-k,m-k},\beta\rangle_{2}\biggr).\end{aligned} \label{innerprod2}
\end{align}
Note that by definition $\beta=\beta_{0,0}$.
Using Lemma \ref{expanding_sums} again, we see that
the only term in the sum~\eqref{innerprod2} that can be nonzero are when $k=\frac{n+m}{2}$;
moreover, the expression~\eqref{innerprod1} is nonzero only when $n+m=2$
while $\langle \beta_{n,m},\beta\rangle_{2} $ is nonzero only when $(n,m)=(0,0)$.
{}From these facts, the formula~\eqref{eq:cbccases} follows easily.
\end{proof}

\section{Left-Right Measure}

Let $M$ be a separable
$\rm{II}_{1}$ factor equipped with its faithful normal tracial state $\tau$.
This trace induces a Hilbert space norm on $M$, given by
$\norm{x}_{2}=\tau(x^{*}x)^{\frac{1}{2}}$, $x\in M$. The
Hilbert space completion of $M$ with respect to
$\norm{\cdot}_{2}$ is denoted by $L^{2}(M)$. The associated inner product on
$L^{2}(M)$ will be denoted by $\langle\cdot,\cdot\rangle_{2}$. Let
$M$ act on $L^{2}(M)$ via left multiplication.
Let $J$ denote the Tomita's modular operator on
$L^{2}(M)$, obtained by extending the densely defined
map $J:M\mapsto M$ given by $Jx=x^{*}$.
Let $A\subset M$ be a masa.
Let $e_{A}$ be the Jones projection associated to $A$. Denote
$\A=A\vee JAJ$. It is known that $e_{A}\in
\A$.

Choose a compact Hausdorff space $X$ such that $C(X)\subset A$ is a
norm separable unital $C^{*}$-subalgebra which is \emph{w.o.t.} dense in $A$. The trace $\tau$ restricted to $C(X)$ gives rise to a
probability measure $\nu$ on $Y$. We complete $\nu$ if necessary, so that
$A$ is isomorphic to $L^{\infty}(Y,\nu)$.

For definitions and details about measure-multiplicity invariant
and left-right measure see \cite{DSS} and \cite{Muk1}.
For $\zeta\in L^{2}(M)$, let
$\kappa_{\zeta}:C(X)\otimes C(X)\mapsto \C$ be the linear functional defined by,
\begin{align}
\nonumber \kappa_{\zeta}(a\otimes b)=\langle
a\zeta b,\zeta\rangle,\text{ } a,b\in C(X).
\end{align}
Then $\kappa_{\zeta}$ induces an unique complex Radon
measure $\eta_{\zeta}$ on $X\times X$ given by,
\begin{equation}\label{eq:eta}
\kappa_{\zeta}(a\otimes b)=\int_{X\times X}a(t)b(s)d\eta_{\zeta}(t,s).
\end{equation}
There is a vector $0\neq \zeta_{0}\in L^{2}(M)\ominus L^{2}(A)$ such that
$[\eta_{\zeta_{0}}]$ is the left-right measure of $A$.

\smallskip

We will use the result of the following elementary calculation, which we show for convenience.

\begin{lem}\label{lem:trig}
Let $x$ be a real or complex number of modulus $|x|<1$, let $\theta$, $\phi$ and $r$
be real numbers.
Then
\begin{multline}\label{eq:trig}
\sum_{n=0}^\infty x^n\sin n\theta\sin(n+r)\phi=
\frac12\left(\frac{\cos(r\phi)-x\cos(\theta+(r-1)\phi)}{1-2x\cos(\theta-\phi)+x^2}\right) \\
 -\frac12\left(\frac{\cos(r\phi)-x\cos(\theta-(r-1)\phi)}{1-2x\cos(\theta+\phi)+x^2}\right).
 \end{multline}
 \end{lem}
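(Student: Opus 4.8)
The plan is to reduce the identity to a sum of geometric series in the complex exponential $z=xe^{i\theta}$ (paired with $e^{i\phi}$), then separate real and imaginary parts. First I would write $\sin n\theta = \frac{1}{2i}(e^{in\theta}-e^{-in\theta})$ and $\sin(n+r)\phi = \frac{1}{2i}(e^{i(n+r)\phi}-e^{-i(n+r)\phi})$, so that the product $\sin n\theta\,\sin(n+r)\phi$ becomes a linear combination of four exponentials $-\frac14\big(e^{in(\theta+\phi)}e^{ir\phi} - e^{in(\theta-\phi)}e^{-ir\phi} - e^{in(-\theta+\phi)}e^{ir\phi} + e^{in(-\theta-\phi)}e^{-ir\phi}\big)$. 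Multiplying by $x^n$ and summing over $n\ge 0$ gives four geometric series, each of the form $\sum_{n\ge0}(x e^{\pm i\theta\pm i\phi})^n = \dfrac{1}{1-xe^{\pm i\theta\pm i\phi}}$, which converge since $|x|<1$.

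Next I would group the four resulting terms into two pairs according to whether the $\phi$-sign in the exponent of $x$ is $+$ or $-$: the terms with denominators $1-xe^{i(\theta+\phi)}$ and $1-xe^{-i(\theta+\phi)}$ form one pair, and those with $1-xe^{i(\theta-\phi)}$ and $1-xe^{-i(\theta-\phi)}$ the other. In each pair I would put the two fractions over a common denominator; using $(1-xe^{i\psi})(1-xe^{-i\psi}) = 1 - 2x\cos\psi + x^2$ for $\psi = \theta\pm\phi$, I get exactly the two denominators appearing on the right-hand side of \eqref{eq:trig}. The numerators, after combining, will be of the form $e^{\pm ir\phi}(1-xe^{\mp i\psi}) - e^{\mp ir\phi}(1-xe^{\pm i\psi})$ up to the overall factor; expanding and collecting, the pure constants contribute $\pm(e^{ir\phi}-e^{-ir\phi})$-type terms while the $x$-linear terms combine into expressions like $e^{ir\phi}e^{-i(\theta-\phi)} - e^{-ir\phi}e^{i(\theta-\phi)}$. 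Recognizing these as $2i\sin$ of appropriate arguments and keeping track of the overall factor $-\frac14\cdot\frac{1}{(2i)}$ (or rather the $\frac{1}{(2i)^2}=-\frac14$ from the two sine expansions, noting the sum is real so only the imaginary parts survive), the $\sin$'s convert to $\cos$'s and one arrives at the stated closed form with the $\frac12$ prefactors and arguments $r\phi$, $\theta+(r-1)\phi$, $\theta-(r-1)\phi$.

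Concretely, I expect the cleanest bookkeeping comes from writing $S=\operatorname{Im}\big(e^{ir\phi}F(\theta-\phi)\big)\cdot(\text{something})$ where $F(\psi)=\sum_{n\ge0}x^n e^{in\psi}\,e^{-in\phi}\cdots$; but since the left side is manifestly real, the safest route is simply: expand all four geometric sums, take the whole expression, and verify that the right-hand side of \eqref{eq:trig} has the same expansion by running the combination of fractions backwards. That is, I would multiply out the two fractions on the right over the product of the two (degree-two in $x$) denominators, and independently multiply out the four-term geometric-sum expression over the same common denominator, and check the numerators agree as trigonometric polynomials in $\theta,\phi,r\phi$. Both are polynomials of low degree in $x$ (degree $\le 3$ after clearing), so matching coefficients of $x^0,x^1,x^2,x^3$ reduces to a handful of elementary product-to-sum identities such as $2\sin a\sin b = \cos(a-b)-\cos(a+b)$ and $2\cos a\cos b = \cos(a-b)+\cos(a+b)$.

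The main obstacle is purely organizational: keeping the signs and the four cross-terms straight when combining the fractions, since a single sign slip in one of the $e^{\pm ir\phi}$ or $e^{\pm i(\theta\pm\phi)}$ factors propagates to the wrong $\cos$ argument. There is no analytic subtlety — convergence is immediate from $|x|<1$, and the manipulation is entirely algebraic — so I would simply be careful to fix once and for all the expansion $\sin n\theta\sin(n+r)\phi = -\tfrac14\sum_{\sigma,\sigma'\in\{\pm1\}}\sigma\sigma'\, e^{in\sigma\theta}e^{i\sigma'(n+r)\phi}$ and track the four terms $(\sigma,\sigma')$ through to the end, pairing $(\,+,+)$ with $(-,-)$ and $(+,-)$ with $(-,+)$ to produce the two fractions with denominators $1-2x\cos(\theta+\phi)+x^2$ and $1-2x\cos(\theta-\phi)+x^2$ respectively.
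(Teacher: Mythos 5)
Your proposal is correct and follows essentially the same route as the paper's proof: expand both sines as complex exponentials, sum the four resulting geometric series (convergent since $|x|<1$), and combine the $e^{\pm i(\theta+\phi)}$ pair and the $e^{\pm i(\theta-\phi)}$ pair over common denominators $1-2x\cos(\theta\pm\phi)+x^2$. The only difference is cosmetic bookkeeping of which terms are paired, and your sign-tracking of the four $(\sigma,\sigma')$ terms is consistent with the paper's computation.
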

\begin{proof}
The left--hand--side of~\eqref{eq:trig} equals
\[
\begin{aligned}
&-\frac14\sum_{n=0}^\infty x^n(e^{in\theta}-e^{-in\theta})(e^{i(n+r)\phi}-e^{-i(n+r)\phi}) \\
&\quad=-\frac14\sum_{n=0}^\infty x^n(e^{in(\theta+\phi)+ir\phi}-e^{-in(\theta-\phi)+ir\phi}
  -e^{in(\theta-\phi)-ir\phi}+e^{-in(\theta+\phi)-ir\phi}) \\
&\quad=-\frac14\biggl(
 \frac{e^{ir\phi}}{1-xe^{i(\theta+\phi)}}
 -\frac{e^{ir\phi}}{1-xe^{-i(\theta-\phi)}}
 -\frac{e^{-ir\phi}}{1-xe^{i(\theta-\phi)}}
 +\frac{e^{-ir\phi}}{1-xe^{-i(\theta+\phi)}}\biggr).
\end{aligned}
\]
Now putting the first and fourth terms as well as the second and third terms over a common denominator
finishes the calculation.
\end{proof}

\begin{rem}\label{rem:lambda}
The Laplacian masa is generated by the self-adjoint operator
$\chi_{1}$.
The computation of the norm of $\chi_1$ and the generating series of its moments
goes back to Kesten~\cite{K}.
{}From this, using Stieljes inversion, the distribution measure of $\chi_1$ can be found, and
one obtains that the spectrum of $\chi_1$ is
$\sigma_{\chi_{1}}=[-2\sqrt{2N-1},2\sqrt{2N-1}]$
and the distribution is Lebesgue absolutely continuous.
Let us write
$a_{N}=2\sqrt{2N-1}$.
So the
weakly dense separable $C^{*}$--subalgebra
$C^{*}(\{1,\chi_{1}\})$
of the Laplacian masa $A$
is identified with $C[-a_{N},a_{N}]$ in such a way that $\chi_1$ is identified with
the function $t\mapsto t$ on $[-a_N,a_N]$.
Thus,
from the recurrence relations~\ref{recur}, $\chi_{n}$ is identified with a polynomial of degree $n$
in $C[-a_{N},a_{N}]$.
Also note that
$\chi_{n}$, $n=0,1,\cdots$, is total family of orthogonal vectors (with respect to
$\langle\cdot,\cdot\rangle_{2}$) in $L^{2}(A)$; thus
$\frac{\chi_{n}}{\norm{\chi_{n}}_{2}}$, $n=0,1,\cdots$, is an orthonormal
basis of $L^{2}(A)$. Finally, the trace $\tau$ restricted to $A$ is identified
with a probability measure $\lambda$ on $[-a_{N},a_{N}]$ for which $(\chi_{n})_{n\ge0}$
is a family of orthogonal polynomials. Note that $\lambda$ is Radon--Nikodym equivalent to
the Lebesgue measure on $[-a_{N},a_{N}]$.
It is straightforward to check from the definition that
$\norm{\chi_{n}}_{2}=\sqrt{2N}(2N-1)^{\frac{n-1}{2}}$ for all $n\geq 1$.
\end{rem}

Consider the function $f:[-a_{N},a_{N}]\times [-a_{N},a_{N}]\mapsto \mathbb{R}$ defined by
\begin{equation}\label{Radondvt}
f(t,s)=
\begin{aligned}[t]
&\biggl(1+\frac{\chi_{1}(t)\chi_{1}(s)}{\norm{\chi_{1}}_{2}^{4}}
-\frac{\chi_{2}(t)+\chi_{2}(s)}{\norm{\chi_{2}}_{2}^{2}}\biggr)
\\
&+\sum_{n=2}^\infty\biggl(2\,\frac{\chi_n(t)\chi_n(s)}{\norm{\chi_n}_2^4}
-\frac{\chi_{n-1}(t)\chi_{n+1}(s)+\chi_{n+1}(t)\chi_{n-1}(s)}{\norm{\chi_{n-1}}_2^2\,\norm{\chi_{n+1}}_2^2}\biggr)
\end{aligned}
\end{equation}

It is possible to give a closed form of $f$ in terms of
transcendental functions. However, for our purpose it is
more important to know the `size' of the zero set of $f$.

\begin{prop}\label{real_analytic}
The function $f$ defined in equation \eqref{Radondvt} is continuous on $[-a_{N},a_{N}]\times [-a_{N},a_{N}]$
and real analytic in the interior of this set.
\end{prop}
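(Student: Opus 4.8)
The plan is to view $f$ as the sum of the polynomial term appearing on the first line of~\eqref{Radondvt} and an infinite series of polynomials, and to show that this series converges uniformly on $[-a_{N},a_{N}]^{2}$ and locally uniformly on a complex neighbourhood of the open square; continuity and real analyticity of $f$ then follow from the corresponding properties of polynomials. Everything reduces to one uniform estimate for the Chebyshev-type polynomials $\chi_{n}$.

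First I would record the closed form of $\chi_{n}$. Writing $t=2\sqrt{2N-1}\cos\theta=a_{N}\cos\theta$, one checks by induction from~\eqref{recur} (this goes back to Kesten~\cite{K} and R\u{a}dulescu~\cite{Rad}) that for $n\ge1$
\[
\chi_{n}(t)=(2N-1)^{n/2}\left(\frac{\sin(n+1)\theta}{\sin\theta}-\frac{1}{2N-1}\cdot\frac{\sin(n-1)\theta}{\sin\theta}\right).
\]
Since $\bigl|\sin k\theta\bigr|\le k\,\bigl|\sin\theta\bigr|$ for real $\theta$, this yields $\abs{\chi_{n}(t)}\le C(n+1)(2N-1)^{n/2}$ for all $t\in[-a_{N},a_{N}]$, with $C$ an absolute constant. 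Combined with $\norm{\chi_{n}}_{2}^{2}=2N(2N-1)^{n-1}$ from Remark~\ref{rem:lambda}, every summand of the series in~\eqref{Radondvt} — both the diagonal term $2\chi_{n}(t)\chi_{n}(s)/\norm{\chi_{n}}_{2}^{4}$ and the off-diagonal term $\chi_{n-1}(t)\chi_{n+1}(s)/(\norm{\chi_{n-1}}_{2}^{2}\norm{\chi_{n+1}}_{2}^{2})$ — is bounded in modulus, uniformly on $[-a_{N},a_{N}]^{2}$, by a constant multiple of $n^{2}(2N-1)^{-n}$. Because $2N-1\ge3$, the series $\sum_{n}n^{2}(2N-1)^{-n}$ converges, so the Weierstrass $M$-test gives uniform convergence of~\eqref{Radondvt} on $[-a_{N},a_{N}]^{2}$; being a uniform limit of polynomials, $f$ is continuous there.

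For real analyticity in the interior, fix $(t_{0},s_{0})$ in the open square and choose $\delta>0$ with $e^{2\delta}<2N-1$, which is possible precisely because $2N-1>1$. Near a point $\theta_{0}\in(0,\pi)$ with $a_{N}\cos\theta_{0}=t_{0}$ the map $\theta\mapsto a_{N}\cos\theta$ is a local biholomorphism, so it carries a small complex disc about $\theta_{0}$, on which $\abs{\IM\theta}\le\delta$ and $\abs{\sin\theta}$ is bounded below, onto a complex disc $D_{t_{0}}$ about $t_{0}$. On $D_{t_{0}}$ the displayed formula for $\chi_{n}$ still holds, and using $\abs{\sin k\theta}\le e^{k\delta}$ there it gives $\abs{\chi_{n}(z)}\le C(n+1)e^{n\delta}(2N-1)^{n/2}$; define $D_{s_{0}}$ analogously. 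Repeating the estimate of the previous step, each summand of~\eqref{Radondvt} is now bounded on $D_{t_{0}}\times D_{s_{0}}$ by a constant times $n^{2}\bigl(e^{2\delta}/(2N-1)\bigr)^{n}$, which is summable. Hence the series converges uniformly on $D_{t_{0}}\times D_{s_{0}}$ to a function holomorphic in the two complex variables and agreeing with $f$ at real points, so $f$ is real analytic at $(t_{0},s_{0})$.

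I expect the only genuine obstacle to be the uniform bound on $\abs{\chi_{n}}$: on the closed interval it is immediate from the trigonometric form, but pushing it into the complex domain one must keep the exponential growth rate $e^{\delta}$ of $\sin k\theta$ strictly below $\sqrt{2N-1}$ — which is exactly what $N\ge2$ provides — and one must stay away from the endpoints $\pm a_{N}$, where $\sin\theta$ vanishes; this is harmless since analyticity is only claimed on the open square. The bookkeeping with the normalizing constants $\norm{\chi_{n}}_{2}$ is routine given Remark~\ref{rem:lambda}.
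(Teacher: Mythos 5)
Your proof is correct, and while the continuity half follows the paper's argument almost exactly, the analyticity half takes a genuinely different route. For continuity, both you and the paper use the trigonometric closed form of $\chi_n$ under $t=a_N\cos\theta$ (your version $\chi_n(t)=(2N-1)^{n/2}\bigl(\sin(n+1)\theta-(2N-1)^{-1}\sin(n-1)\theta\bigr)/\sin\theta$ is the same, via $2\cos\theta\sin n\theta=\sin(n+1)\theta+\sin(n-1)\theta$, as the formula the paper quotes from Cohen), the elementary bound $\abs{\sin k\theta}\le k\,\abs{\sin\theta}$, and the Weierstrass $M$-test against $\sum_n n^2(2N-1)^{-n}$. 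For real analyticity the paper substitutes the trigonometric expressions into the series and sums it in closed form using Lemma~\ref{lem:trig}, obtaining an explicit expression for $f$ that is visibly real analytic in $(\theta,\phi)$ and hence, by the real-analytic change of variables, in $(t,s)$ on the open square. You instead push the estimate into a complex polydisc about each interior point, using $\abs{\sin k\theta}\le e^{k\delta}$ on $\abs{\IM\theta}\le\delta$ with $e^{2\delta}<2N-1$ to get locally uniform convergence of the series of polynomials to a holomorphic function of two complex variables agreeing with $f$ on the reals. Your route is shorter and makes Lemma~\ref{lem:trig} unnecessary (the paper uses it only here), at the cost of not producing the closed form that the explicit summation yields. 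Two small points you should make explicit: the trigonometric identity for $\chi_n$ persists on the complex disc $D_{t_0}$ by analytic continuation (both sides are holomorphic in $\theta$ and agree on a real interval), and your complex bound on $\abs{\chi_n(z)}$ should carry the factor $1/\abs{\sin\theta}$, which is harmless only because you have arranged a positive lower bound for $\abs{\sin\theta}$ on the disc and can absorb it into the constant.
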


\begin{proof}
Let $t\in [-a_{N},a_{N}]$ and $\theta\in [0,\pi]$.
Consider the homeomorphism $t\mapsto \theta$ from
$[-a_{N},a_{N}]$ onto $[0,\pi]$ such that
\[
\cos\theta=\frac{t}{a_{N}},\quad \sin\theta=\frac{\sqrt{a_{N}^{2}-t^{2}}}{a_{N}}.
\]
Then from p.~1062 of~\cite{Co} we have,  for all $n\geq 1$,
\[
\frac{\chi_{n}(t)}{(2N-1)^{\frac{n}{2}}}=\frac{2\cos\theta \sin n\theta}{\sin\theta}-\frac{2N}{2N-1}\cdot\frac{\sin (n-1)\theta}{\sin\theta}.
\]
Write $b=2N-1$ and $d=\frac{2N-1}{2N}$. Then
\[
\frac{\chi_{n}(t)}{\norm{\chi_{n}}_{2}^{2}}=b^{-n/2}
 \left(\frac{2d\cos\theta \sin n\theta-\sin (n-1)\theta}{\sin\theta}\right).
\]
Since $\frac{|\sin n\theta|}{\sin\theta}\le n$ for all integers $n\ge1$ and all $\theta\in[0,\pi]$
(which is easily shown by induction on $n$), we have
$\frac{|\chi_{n}(t)|}{\norm{\chi_{n}}_{2}^{2}}\le3nb^{-n/2}$.
Thus, the series in equation~\eqref{Radondvt} converges absolutely and uniformly for $(s,t)$ in the domain of $f$.
This implies continuity of $f$ on its domain.

For $t,s\in [-a_{N},a_{N}]$, taking $\phi\in[0,\pi]$ so that
$\cos\phi=\frac{s}{a_{N}}$ and $\sin\phi=\frac{\sqrt{a_{N}^{2}-s^{2}}}{a_{N}}$, we have
\[
\begin{aligned}
\frac{\chi_{n}(t)\chi_{n}(s)}{\norm{\chi_{n}}_{2}^4}
&=b^{-n}\left(\frac1{\sin\theta\sin\phi}\right)\cdot \\
&\quad\biggl( \begin{aligned}[t]
  &4d^2\cos\theta\cos\phi \sin n\theta \sin n\phi -2d\cos\theta\sin n\theta\sin(n-1)\phi \\
  &-2d\cos\phi\sin (n-1)\theta\sin n\phi+\sin (n-1)\theta\sin (n-1)\phi\bigg),\end{aligned}
\end{aligned}
\]
and one can write a similar expression for
\[
\frac{\chi_{n-1}(t)\chi_{n+1}(s)+\chi_{n+1}(t)\chi_{n-1}(s)}{\norm{\chi_{n-1}}_{2}^{2}\norm{\chi_{n+1}}_{2}^{2}}.
\]
Now summing over $n$ and using Lemma~\ref{lem:trig}, we obtain a closed form expression for $f(t,s)$
and see that it is a real analytic function of $\theta$ and $\phi$.
Since, for $(t,s)$ in the interior of $[-a_{N},a_{N}]\times [-a_{N},a_{N}]$, $\theta$ and $\phi$ are real analytic
functions of $t$ and $s$, $f(t,s)$ is indeed a real analytic function of $(t,s)$ there.
\end{proof}

\begin{thm}
The left-right measure of $A=W^{*}(\sum_{i=1}^{N}(a_{i}+a_{i}^{-1}))\subset L(\mathbb{F}_{N})$
is the class of product measure.
\end{thm}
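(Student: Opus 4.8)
The plan is to exhibit a single explicit vector $\beta\in L^{2}(M)\ominus L^{2}(A)$ whose associated measure $\eta_{\beta}$ is Radon--Nikodym equivalent to the product measure $\lambda\otimes\lambda$, and then to sandwich the left-right measure between $[\eta_{\beta}]$ and $[\lambda\otimes\lambda]$. Take $\beta=a_{1}-a_{1}^{-1}$. It is antisymmetric, so it lies in $M_{0}^{1}\ominus S_{1}$ (recall $S_{1}=\C\chi_{1}$ and $\chi_{1}$ is symmetric), and one checks $\langle\beta,\chi_{n}\rangle_{2}=0$ for every $n\ge0$: for $n\ne1$ because $\beta\in M_{0}^{1}$, for $n=1$ by antisymmetry, and for $n=0$ because $\tau(\beta)=0$. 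Hence $\beta\in L^{2}(M)\ominus L^{2}(A)$. Moreover $\beta$ has the form treated in Lemmas \ref{expanding_sums} and \ref{lem:betaip}, with $\varepsilon=-1$.

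First I would compute the numbers $\langle\chi_{n}\beta\chi_{m},\beta\rangle_{2}=\kappa_{\beta}(\chi_{n}\otimes\chi_{m})$ from Lemma \ref{lem:betaip} with $\varepsilon=-1$ and check that they satisfy, for all $n,m\ge0$,
\[
\langle\chi_{n}\beta\chi_{m},\beta\rangle_{2}=\norm{\beta}_{2}^{2}\,\norm{\chi_{n}}_{2}^{2}\,\norm{\chi_{m}}_{2}^{2}\,c_{n,m},
\]
where $c_{n,m}$ is the coefficient of $\chi_{n}(t)\chi_{m}(s)$ in the series \eqref{Radondvt} defining $f$. Since $(\chi_{n})_{n\ge0}$ is a system of orthogonal polynomials for $\lambda$ (Remark \ref{rem:lambda}) and, by Proposition \ref{real_analytic}, the series \eqref{Radondvt} converges uniformly on $[-a_{N},a_{N}]\times[-a_{N},a_{N}]$, integrating term by term gives
\[
\int\chi_{n}(t)\chi_{m}(s)\,d\eta_{\beta}=\langle\chi_{n}\beta\chi_{m},\beta\rangle_{2}=\int\chi_{n}(t)\chi_{m}(s)\,d\bigl(\norm{\beta}_{2}^{2}f\cdot(\lambda\otimes\lambda)\bigr)
\]
for all $n,m\ge0$. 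The functions $\chi_{n}(t)\chi_{m}(s)$ span the polynomials on $[-a_{N},a_{N}]\times[-a_{N},a_{N}]$, which are dense in $C([-a_{N},a_{N}]\times[-a_{N},a_{N}])$ by Stone-Weierstrass; since $\eta_{\beta}$ and $\norm{\beta}_{2}^{2}f\cdot(\lambda\otimes\lambda)$ are finite Radon measures agreeing on this dense subspace, they coincide, so $d\eta_{\beta}=\norm{\beta}_{2}^{2}f\,d(\lambda\otimes\lambda)$.

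It then remains to show $f\ne0$ almost everywhere with respect to $\lambda\otimes\lambda$. By Proposition \ref{real_analytic}, $f$ is continuous on $[-a_{N},a_{N}]\times[-a_{N},a_{N}]$ and real analytic on its interior, which is a connected open subset of $\Real^{2}$; integrating \eqref{Radondvt} term by term against $\lambda\otimes\lambda$ leaves only the constant term (because $\int\chi_{k}\,d\lambda=0$ for $k\ge1$), so $\int f\,d(\lambda\otimes\lambda)=1$ and in particular $f$ is not identically zero. By the standard fact that the zero set of a non-zero real analytic function on a connected open set has Lebesgue measure zero, together with the Radon--Nikodym equivalence of $\lambda$ with Lebesgue measure on $[-a_{N},a_{N}]$ (Remark \ref{rem:lambda}) and the vanishing of $\lambda\otimes\lambda$ on the boundary of the square, we obtain $f\ne0$ almost everywhere $[\lambda\otimes\lambda]$; hence $[\eta_{\beta}]=[\lambda\otimes\lambda]$. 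Finally, since the left-right measure is the class of the measure appearing in the $\A$--bimodule decomposition of $L^{2}(M)\ominus L^{2}(A)$, it dominates $[\eta_{\zeta}]$ for every $\zeta\in L^{2}(M)\ominus L^{2}(A)$, so in particular $[\lambda\otimes\lambda]=[\eta_{\beta}]\le[\eta_{\zeta_{0}}]$; and, as recalled in the introduction, $[\eta_{\zeta_{0}}]\le[\lambda\otimes\lambda]$ by \cite{SS}. Therefore $[\eta_{\zeta_{0}}]=[\lambda\otimes\lambda]$, which is the assertion of the theorem.

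I expect the main obstacle to be the second paragraph: checking that the \emph{a priori} abstract measure $\eta_{\beta}$ has the concrete absolutely continuous form $\norm{\beta}_{2}^{2}f\,d(\lambda\otimes\lambda)$. This rests on getting the sign bookkeeping in Lemma \ref{lem:betaip} exactly right, so that the choice $\varepsilon=-1$ reproduces precisely the coefficients appearing in \eqref{Radondvt}, and on a careful justification of the interchange of summation and integration. By contrast, the analytic input (Proposition \ref{real_analytic}) and the fact that zero sets of non-zero real analytic functions are Lebesgue null are routine.
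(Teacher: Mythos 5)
Your computational core is exactly the paper's: the paper also takes a $\beta\in M_0^1\ominus S_1$ with $\varepsilon=-1$ (any such $\beta$; your $a_1-a_1^{-1}$ is a valid instance), expands $\langle a\beta b,\beta\rangle_2$ over the orthogonal basis $\chi_n/\norm{\chi_n}_2$ using Lemma~\ref{lem:betaip}, identifies $\frac{d\eta_\beta}{d(\lambda\otimes\lambda)}$ with (a multiple of) $f$, and kills the zero set of $f$ by real analyticity via Proposition~\ref{real_analytic}. Your sign bookkeeping with $\varepsilon=-1$ does reproduce the coefficients of~\eqref{Radondvt}, and your observation that $\int f\,d(\lambda\otimes\lambda)=1$ is a clean way to see $f\not\equiv0$. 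This whole portion is fine and matches the paper.

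The genuine gap is in how you close the argument. The inclusion $[\lambda\otimes\lambda]=[\eta_\beta]\le[\nu]$ is obtained the same way as in the paper, but for the reverse inclusion $\nu\ll\lambda\otimes\lambda$ you invoke the introduction's remark that this ``follows (but not obviously) from results in \cite{SS}.'' That assertion is never proved in the paper, the authors explicitly flag it as non-obvious, and it carries half the content of the theorem; as written, your proof is therefore not self-contained at exactly the critical point. The paper avoids \cite{SS} entirely and instead closes the loop with R\u{a}dulescu's structure theory: by the proof of Theorem~7 of \cite{Rad}, the central support in $\A'$ of the projection $p_\gamma$ onto $\overline{\A\gamma}^{\norm{\cdot}_2}$ is $1-e_A$, and by Lemma~6 of \cite{Rad} the projection $1-e_A$ dominates infinitely many projections equivalent in $\A'$ to $p_\gamma$; hence $1-e_A$ is a sum of projections all equivalent in $\A'$ to subprojections of $p_\gamma$, which forces the left--right measure to lie in the same class as $\eta_\gamma$ ({\em cf.}\ Proposition~5.8 of \cite{DSS}). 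To make your argument complete you should either substitute this structural step for the appeal to \cite{SS}, or actually extract and prove the absolute continuity statement from \cite{SS} rather than citing it as known.
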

\begin{proof}
Let $\beta\in M_{0}^{1}\ominus S_{1}$ be as in Lemma
\ref{expanding_sums} with $\varepsilon=-1$. Let
$\gamma=\frac{\beta}{\norm{\beta}_{2}}$. Let $p_\gamma$ be the orthogonal projection
onto the subspace $\overline{\A\gamma}^{\norm{\cdot}_{2}}$.
We will show below that the measure class of $\eta_\gamma$ on $[-a_N,a_N]\times[-a_N,a_N]$,
which is defined in~\eqref{eq:eta},
is the same as $\lambda\otimes\lambda$, where $\lambda$ is the measure coming from $\tau$
defined in Remark~\ref{rem:lambda}.
This will imply $\lambda\otimes\lambda\ll\nu$, where $\nu$ is the left--right measure of $A$.
Indeed (see sections~5 and~6 of~\cite{DSS}), the restriction of
the left--right action of $A\otimes A$ to the range of $p_\gamma$ is a direct integral of Hilbert spaces
with respect to the measure $\eta_\gamma$ and with multiplicity function constantly one.

By the proof of Theorem~7 of~\cite{Rad}, the central support in $\A^{\prime}$ of $p_\gamma$
is $1-p_1$, and $p_1$ is equal to the Jones projection $e_A$ onto $L^2(A)$.
Moreover, by Lemma~6 of~\cite{Rad}, it follows that $1-p_1$ is greater than or equal to the sum of
infinitely many projections all equivalent in $\A'$ to $p_\gamma$.
It follows that $1-p_1$ is equal to a sum projections, infinitely many of which are equivalent in $\A'$
to $p_\gamma$ and all of which are equivalent in $\A'$ to subprojections of $p_\gamma$.
{}From this, it follows that the left--right measure $\nu$ of $A$ has the same measure class as $\eta_\gamma$,
{\em cf}\/ Proposition~5.8 of~\cite{DSS}.

Now we go about describing $\eta_\gamma$.
For $a,b\in C[-a_{N},a_{N}]$, one has
\begin{align}
\nonumber &a=\sum_{n=0}^{\infty}\langle a,\frac{\chi_{n}}{\norm{\chi_{n}}_{2}}\rangle_{2}\frac{\chi_{n}}{\norm{\chi_{n}}_{2}},\\
\nonumber &b=\sum_{n=0}^{\infty}\langle b,\frac{\chi_{n}}{\norm{\chi_{n}}_{2}}\rangle_{2}\frac{\chi_{n}}{\norm{\chi_{n}}_{2}},
\end{align}
where the series in the above converges in $\norm{\cdot}_{2}$.
Thus using Lemma \ref{lem:betaip} it follows that
\begin{align}
\notag
\langle a\gamma b, \gamma\rangle_{2}
&=\sum_{n,m=0}^{\infty}
\langle a,\chi_{n}\rangle_{2}\,\langle b,\chi_{m}\rangle_{2}
\frac{\langle \chi_{n}\beta \chi_{m},\beta\rangle_{2} }{\norm{\chi_{n}}_{2}^2\,\norm{\chi_{m}}_{2}^2\,\norm{\beta}_{2}^{2}}  \\
&=\tau(a)\tau(b)+\langle a,\chi_1\rangle\,\langle b,\chi_1\rangle
  \,\frac1{\norm{\chi_1}_2^4}  \label{dblsum} \\
&\quad-\biggl(\tau(a)\langle b,\chi_2\rangle
   +\langle a,\chi_2\rangle\tau(b)\biggr)\frac1{\norm{\chi_2}_2^2} \notag \\
&\quad+2\sum_{k=2}^\infty
   \langle a,\chi_k\rangle\,\langle b,\chi_k\rangle
  \,\frac1{\norm{\chi_k}_2^4} \notag \\
&\quad-\sum_{k=1}^\infty\biggl(
    \langle a,\chi_k\rangle\,\langle b,\chi_{k+2}\rangle
   +\langle a,\chi_{k+2}\rangle\,\langle b,\chi_k\rangle\biggr)
    \,\frac1{\norm{\chi_k}_2^2\,\norm{\chi_{k+2}}_2^2} . \notag
\end{align}
Consequently from equation \eqref{Radondvt} and equation \eqref{dblsum} it follows that
\begin{align}
\notag\int_{-a_{N}}^{a_{N}}\int_{-a_{N}}^{a_{N}} a(t)b(s)d\eta_{\gamma}(t,s)=\int_{-a_{N}}^{a_{N}}\int_{-a_{N}}^{a_{N}} a(t)b(s)f(t,s)d\lambda(t)d\lambda(s).
\end{align}
Thus $\eta_{\gamma}\ll \lambda\otimes \lambda$ with
$\frac{d\eta_{\gamma}}{d(\lambda\otimes \lambda)}=f$.

Write $E=\left\{(t,s):f(t,s)=0\right\}$.
The intersection of $E$ with the open set $(-a_N,a_N)\times(-a_N,a_N)$ is the zero set of a real analytic function.
It is easy to see (based on induction on the number of variables) that the Lebesgue measure of the zero
set of a real analytic function in several variables must vanish, unless the function is identically zero.
Thus, we have $(\lambda\otimes \lambda)(E)=0$
and
$\eta_{\gamma}\sim \lambda\otimes \lambda$.
\end{proof}

\begin{bibdiv}
\begin{biblist}

\bib{Muk3}{article}{
  author={Cameron, Jan},
  author={Fang, Junsheng},
  author={Mukherjee, Kunal},
  title={Mixing and weak mixing abelian subalgebras of type II$_1$ factors},
  status={preprint}
  year={2011}
}

\bib{CFRW}{article}{
   author={Cameron, Jan},
   author={Fang, Junsheng},
   author={Ravichandran, Mohan},
   author={White, Stuart},
   title={The radial masa in a free group factor is maximal injective},
   journal={J. Lond. Math. Soc. (2)},
   volume={82},
   date={2010},
%   number={3},
   pages={787--809},
%   issn={0024-6107},
%   review={\MR{2739068}},
%   doi={10.1112/jlms/jdq052},
}

\bib{Co}{article}{
   author={Cohen, Joel M.},
   title={Operator norms on free groups},
%   language={English, with Italian summary},
   journal={Boll. Un. Mat. Ital. B (6)},
   volume={1},
   date={1982},
%   number={3},
   pages={1055--1065},
%   review={\MR{683492 (85d:22011)}},
}

\bib{Dix}{article}{
  author={Dixmier, Jacques},
  title={Sous--anneaux abeliens maximaux dans les facteurs de type fini},
  journal={Ann.\ of Math. (2)},
  volume={59},
  year={1954},
  pages={279--286}
}

\bib{DSS}{article}{
  author={Dykema, Ken},
  author={Sinclair, Allan M.},
  author={Smith, Roger R.},
  title={Values of the Pukanszky invariant in free group factors and the hyperfinite factor},
  journal={J. Funct. Anal.},
  volume={240},
  year={2006},
  pages={373--398}
}

\bib{JS}{article}{
  author={Jolissaint, Paul},
  author={Stalder, Yves},
  title={Strongly singular MASAs and mixing actions in finite von Neumann algebras},
  journal={Ergodic Theory Dynam. Systems},
  volume={28},
  pages={1861-???1878},
  year={2008}
}

%\bib{Kad}{book}{
%  author={Kadison, Richard V.},
%  author={Ringrose, John R.},
%  title={Fundamentals of the Theory of Operator Algebras, vol. II},
%  publisher={Academic Press},
%  year={1983}
%}

\bib{K}{article}{
  author={Kesten, Harry},
  title={Symmetric random walks on groups},
  journal={Trans. Amer. Math. Soc.},
  volume={92},
  year={1959},
  pages={336--354}
}

% \bib{Kr}{book}{
%   author={Krantz, Steven G.},
%   title={Function Theory of Several Complex Variables, Second Edition},
%   series={},
%   volume={},
%   publisher={AMS Chelsea Publishing},
%   address={},
%   year={2001}
% }

\bib{Muk1}{article}{
  author={Mukherjee, Kunal},
  title={Masas and bimodule decompositions of II$_1$--factors},
  journal={Q. J. Math.},
  volume={62},
  year={2011},
  pages={451--486}
}

\bib{Muk2}{article}{
  author={Mukherjee, Kunal},
  title={Singular masas and measure-multiplicity invariant},
  status={to appear},
  journal={Houston J. Math.},
}

\bib{NS}{article}{
  author={Neshveyev, Sergei},
  author={St{\o}rmer, Erling},
  title={Ergodic theory and maximal abelian subalgebras of the hyperfinite factor},
  journal={J. Funct. Anal.},
  volume={195},
  year={2002},
  pages={239--261}
}

\bib{Pop1}{article}{
   author={Popa, Sorin},
   title={Orthogonal pairs of $\ast $-subalgebras in finite von Neumann
   algebras},
   journal={J. Operator Theory},
   volume={9},
   date={1983},
%   number={2},
   pages={253--268},
%   issn={0379-4024},
%   review={\MR{703810 (84h:46077)}},
}
		
\bib{Pop4}{article}{
   author={Popa, Sorin},
   title={Maximal injective subalgebras in factors associated with free
   groups},
   journal={Adv. in Math.},
   volume={50},
   date={1983},
 %  number={1},
   pages={27--48},
%   issn={0001-8708},
%   review={\MR{720738 (85h:46084)}},
%   doi={10.1016/0001-8708(83)90033-6},
}

\bib{Pop2}{article}{
  author={Popa, Sorin},
  author={Shlyakhtenko, Dimitri},
  title={Cartan subalgebras and bimodule decompositions of II$_1$--factors},
  journal={Math. Scand.},
  volume={92},
  year={2003},
  pages={93--102}
}

\bib{Puk}{article}{
  author={Puk\'{a}nszky, L.},
  title={On maximal abelian subrings of factors of type II$_1$},
  journal={Canad. J. Math.},
  volume={12},
  year={1960},
  pages={289--296}
}

\bib{Pyt}{article}{
   author={Pytlik, T.},
   title={Radial functions on free groups and a decomposition of the regular
   representation into irreducible components},
   journal={J. Reine Angew. Math.},
   volume={326},
   date={1981},
   pages={124--135},
%   issn={0075-4102},
%   review={\MR{622348 (84a:22017)}},
%   doi={10.1515/crll.1981.326.124},
}

\bib{Rad}{article}{
   author={R{\u{a}}dulescu, Florin},
   title={Singularity of the radial subalgebra of ${\scr L}(F_N)$ and the
   Puk\'anszky invariant},
   journal={Pacific J. Math.},
   volume={151},
   date={1991},
%   number={2},
   pages={297--306},
%   issn={0030-8730},
%   review={\MR{1132391 (93b:46120)}},
}

\bib{RS}{article}{
   author={Robertson, Guyan},
   author={Steger, Tim},
   title={Maximal abelian subalgebras of the group factor of an $\widetilde{A}_2$ group},
   journal={J. Operator Theory},
   volume={36},
   date={1996},
%   number={2},
   pages={317--334},
%   issn={0379-4024},
%   review={\MR{1432121 (98b:46078)}},
}

\bib{SS}{article}{
   author={Sinclair, Allan M.},
   author={Smith, Roger R.},
   title={The Laplacian MASA in a free group factor},
   journal={Trans. Amer. Math. Soc.},
   volume={355},
   date={2003},
%   number={2},
   pages={465--475 (electronic)},
%   issn={0002-9947},
%   review={\MR{1932708 (2003k:46088)}},
%   doi={10.1090/S0002-9947-02-03173-2},
}

\bib{SS2}{article}{
   author={Sinclair, Allan M.},
   author={Smith, Roger R.},
   title={The Puk\'anszky invariant for masas in group von Neumann factors},
   journal={Illinois J. Math.},
   volume={49},
   date={2005},
%   number={2},
   pages={325--343 (electronic)},
%   issn={0019-2082},
%   review={\MR{2163938 (2006g:46097)}},
}
		
\bib{SS08}{book}{
  author={Sinclair, Allan M.},
  author={Smith, Roger R.},
  title={Finite von Neumann algebras and masas},
  series={London Mathematical Society Lecture Note Series},
  volume={351},
  publisher={Cambridge University Press},
  address={Cambridge},
  year={2008}
}

\bib{Voi}{article}{
  author={Voiculescu, Dan},
  title={The analogues of entropy and of Fisher's information measure in free probability theory III:
         The absence of Cartan subalgebras},
  journal={Geom. Funct. Anal.},
  volume={6},
  year={1996},
  pages={172--199}
}

\end{biblist}
\end{bibdiv}

\end{document}